\newtheorem{theorem}{Theorem}[section]
\newtheorem{lemma}[theorem]{Lemma}
\newtheorem{proposition}[theorem]{Proposition}
\newtheorem{corollary}[theorem]{Corollary}
\newtheorem{remark}[theorem]{Remark}
\newtheorem{question}{Question}
\newtheorem*{theorem*}{Theorem}
\newcommand{\Q}{{\mathbb Q}}
\newcommand{\F}{{\mathbb F}}
\newcommand{\HQ}{{\mathbb H}}
\newcommand{\GEN}[1]{\left\langle #1 \right\rangle}
\newcommand{\ZZ}{\mathrm{Z}}
\newcommand{\U}{\mathcal{U}}
\newcommand{\qand}{\quad \text{and} \quad}
\title[]{On commutative tensor factors of group algebras}
\author{Diego Garc\'{\i}a-Lucas }
 \address{Departamento de Matem\'aticas, Universidad de Murcia, Spain}
 \email{diego.garcial@um.es}
\author{Ángel del Río}
 \address{Departamento de Matem\'aticas, Universidad de Murcia, Spain}
 \email{adelrio@um.es}
 \author{Taro Sakurai}
 \address{Department of Mathematics and Informatics, Graduate School of Science, Chiba University, 1-33, Yayoi-cho, Inage-ku, Chiba-shi, Chiba, 263-8522, Japan}
 \email{tsakurai@math.s.chiba-u.ac.jp}
\thanks{The first two authors have been partially supported by Grant PID2020-113206GB-I00 funded by MCIN/AEI/10.13039/501100011033  and by Grant Fundación Séneca 22004/PI/22. }
\keywords{group algebras, tensor factors.}
\subjclass{16S34, 20C05, 16L30}
\date{\today}
\begin{document}

\begin{abstract}  
 We prove that any tensor product factorization with a commutative factor of a modular group algebra over a prime field comes from a direct product decomposition of the group basis. This extends previous work by Carlson and Kovács for the commutative case and answers a question of them in some cases. 
\end{abstract}

 \maketitle
  
 The aim of this paper is to investigate how a group algebra $kG$ of a group $G$ over a field $k$ may decompose as tensor product of subalgebras. The most obvious factorizations come from direct product decompositions of $G$ but, in general, these are not the only ones. 
For example, if $k$ has an element of multiplicative order $4$ then $kC_4\cong k(C_2\times C_2)\cong kC_2\otimes_k kC_2$, even though $C_4$ is indecomposable as a direct product.	
More elaborated examples in the semisimple case are provided at the end of this paper. 
However, no example of tensor product factorization not coming from a direct product decomposition of the group basis is known in the modular case, i.e. when $G$ is a finite $p$-group with $p$ the characteristic of $k$. 
If it were true that every tensor product factorization of a modular group algebra comes from a direct product factorization of the group basis, then there would be a Krull-Schmidt-like theorem for tensor factorizations of modular group algebras. 
Observe that modular group algebras are finite dimensional, local and augmented; and for artinian local augmented algebras of characteristic zero such Krull-Schmidt property holds \cite{Horst1986}.  

 A first attempt to study tensor product factorizations of modular group algebras was carried out by J. F. Carlson and L. G. Kovács who proved that for commutative modular group algebras any tensor product decomposition comes from a direct product decomposition of the group basis. More precisely the prove the following theorem. 

 \begin{theorem}\cite[Theorem 2.2]{CarlsonKovacs} \label{CarlsonKovacs}
  Let $p$ be a prime, $k$ be a field of characteristic $p$, and $G$ a finite abelian $p$-group.  To each tensor factorization $kG \cong \bigotimes_{i=1}^n A_i$ of $kG$, there is a direct decomposition $G=\prod_{i=1}^n G_i$ such that for each $i$:
  \begin{enumerate}
   \item $A_i\cong kG_i$, and
   \item $kG= A_i \otimes \left( \bigotimes_{j\neq i} kG_j\right)$.
  \end{enumerate}
 \end{theorem}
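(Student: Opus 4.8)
The plan is to prove the statement by a double induction, on the number $n$ of tensor factors and on $|G|$, extracting the group decomposition from the Frobenius ($p$-th power) structure of $kG$, which is intrinsic to the algebra and compatible with tensor products. First some reductions. Since $G$ is a finite abelian $p$-group and $\mathrm{char}\,k=p$, the algebra $kG$ is commutative and local, with maximal ideal the augmentation ideal $I$ and residue field $k$. If $kG\cong A\otimes B$ then neither $A$ nor $B$ has a nontrivial idempotent (else $kG$ would), so each is local; composing the inclusion with the augmentation gives a $k$-algebra map $A\to k$, whence $A=k\oplus J_A$ is augmented with residue field $k$ and radical $J_A$. Iterating, each $A_i=k\oplus J_i$ is local augmented with residue field $k$. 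It therefore suffices to treat $n=2$ and induct on $n$, recursing on the remaining factor once $A_1\cong kG_1$ has been split off; property (2) will be read off from the construction.

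The main tool is as follows. For a finite-dimensional commutative $k$-algebra $A$ of characteristic $p$, let $A^{\langle i\rangle}$ be the $k$-subalgebra generated by $\{a^{p^i}:a\in A\}$; since Frobenius is a ring endomorphism and $(x+y)^{p^i}=x^{p^i}+y^{p^i}$, this set is already a subring and $A^{\langle i\rangle}$ is its $k$-span. Two facts drive the argument: first, $(A\otimes B)^{\langle i\rangle}=A^{\langle i\rangle}\otimes B^{\langle i\rangle}$, because $(a\otimes b)^{p^i}=a^{p^i}\otimes b^{p^i}$ while $a^{p^i}\otimes 1=(a\otimes 1)^{p^i}$ and $1\otimes b^{p^i}=(1\otimes b)^{p^i}$ also lie in the left-hand side; second, $(kG)^{\langle i\rangle}=k[G^{p^i}]$, because $(\sum_g c_g g)^{p^i}=\sum_g c_g^{p^i}g^{p^i}$ spans the group algebra of $G^{p^i}$. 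Combining, a factorization $kG\cong\bigotimes_\ell A_\ell$ induces $k[G^{p^i}]\cong\bigotimes_\ell A_\ell^{\langle i\rangle}$ for every $i$; in particular each $\dim_k A_\ell^{\langle i\rangle}$ divides $|G^{p^i}|$ and is thus a power of $p$, while additivity of cotangent spaces, $I/I^2\cong\bigoplus_\ell J_\ell/J_\ell^2$, records the rank of $G$ as $\sum_\ell\dim_k J_\ell/J_\ell^2$.

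Now induct on $|G|$. If $G$ is elementary abelian, then $a^p=0$ for every $a\in I$, hence for every $a\in J_\ell$, and the Hilbert series of $kG$ for the radical filtration is $\Phi_p(t)^r$ with $\Phi_p(t)=1+t+\cdots+t^{p-1}$; multiplicativity of the Hilbert series over tensor products ($\mathrm{gr}(A\otimes B)=\mathrm{gr}A\otimes\mathrm{gr}B$) together with the irreducibility of $\Phi_p$ in $\Z[t]$ forces the Hilbert series of each $A_\ell$ to be $\Phi_p(t)^{a_\ell}$, so $\dim_k A_\ell=p^{a_\ell}$ and $\dim_k J_\ell/J_\ell^2=a_\ell$. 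Lifting a basis of $J_\ell/J_\ell^2$ and using $a^p=0$ yields a surjection $k[x_1,\dots,x_{a_\ell}]/(x_1^p,\dots,x_{a_\ell}^p)\twoheadrightarrow A_\ell$ between algebras of equal dimension $p^{a_\ell}$, hence an isomorphism $A_\ell\cong k(C_p^{a_\ell})$. If $G$ is not elementary abelian, apply the inductive hypothesis to $k[G^p]\cong\bigotimes_\ell A_\ell^{\langle 1\rangle}$ to get a direct decomposition $G^p=\prod_\ell H_\ell$ with $A_\ell^{\langle 1\rangle}\cong kH_\ell$, and then reconstruct $A_\ell$ from its Frobenius subalgebra $A_\ell^{\langle 1\rangle}$ and its cotangent data, exhibiting truncated-polynomial generators of $A_\ell$ that lift a group basis of $H_\ell$ and whose orders are read off from the Frobenius-induced maps out of $J_\ell/J_\ell^2$.

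The main obstacle is exactly this last lifting-and-assembly step. Recovering the isomorphism type of each $A_\ell$ as a group algebra is not automatic from the local augmented structure, since not every such algebra is a group algebra, so one must genuinely construct a group basis; and even granting $A_\ell\cong kG_\ell$, one must choose the subgroups $G_\ell$ inside the single group $G$ so that $G=\prod_\ell G_\ell$ is a direct product. The decomposition of $G^p$ supplied by induction need not a priori be the set of $p$-th powers of such a decomposition of $G$, so the lifts must be made simultaneously compatible. Controlling this compatibility, via the Frobenius-induced maps on cotangent spaces and careful dimension bookkeeping, is where the weight of the argument lies; property (2) then follows by verifying that the constructed generators of $A_i$ together with the chosen group bases of the $G_j$ for $j\neq i$ already span $kG$.
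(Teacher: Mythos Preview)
The paper does not contain a proof of this statement: \Cref{CarlsonKovacs} is quoted from \cite{CarlsonKovacs} as background and is used as a black box (notably inside \Cref{prop:Babelian}\ref{prop:Babelian1} and in the proof of \Cref{theorem}). So there is no ``paper's own proof'' to compare your proposal against; what follows is an assessment of your argument on its own merits.

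Your reductions and your treatment of the elementary abelian base case are sound. The observations that each $A_i$ is local and augmented with residue field $k$, that the Frobenius subalgebras satisfy $(A\otimes B)^{\langle i\rangle}=A^{\langle i\rangle}\otimes B^{\langle i\rangle}$, that $(kG)^{\langle i\rangle}=k[G^{p^i}]$, and that Hilbert series multiply under tensor products are all correct; the irreducibility of $\Phi_p$ together with the constant term~$1$ indeed forces $H_{A_\ell}(t)=\Phi_p(t)^{a_\ell}$, and the surjection $k[x_1,\dots,x_{a_\ell}]/(x_j^p)\twoheadrightarrow A_\ell$ is an isomorphism by dimension count.

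The genuine gap is exactly where you flag it: the inductive step is not carried out. Knowing $A_\ell^{\langle 1\rangle}\cong kH_\ell$ and $\dim J_\ell/J_\ell^2$ does not by itself pin down the isomorphism type of $A_\ell$; there exist commutative local augmented $k$-algebras with the same Frobenius subalgebra and cotangent dimension as a group algebra that are not group algebras, so one must really use the ambient constraint $A_\ell\otimes(\bigotimes_{j\ne\ell}A_j)\cong kG$ and not just the data you have isolated. Moreover, even after producing isomorphisms $A_\ell\cong kG_\ell$ abstractly, you must realize the $G_\ell$ as subgroups of the given $G$ with $G=\prod_\ell G_\ell$, and then verify item~(2). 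Your final paragraph describes the difficulty accurately but does not resolve it: phrases like ``careful dimension bookkeeping'' and ``Frobenius-induced maps on cotangent spaces'' do not yet constitute an argument. As written, the proposal is a reasonable outline whose decisive step is missing; to turn it into a proof you would need to either execute the lifting explicitly (for instance by producing, for each $\ell$, elements $y_1,\dots,y_{d_\ell}\in J_\ell$ with prescribed nilpotency orders $p^{e_j}$ such that the induced map $\bigotimes_j k[x]/(x^{p^{e_j}})\to A_\ell$ is an isomorphism, and then matching $\sum e_j$ and the $e_j$ themselves against invariants of $G$), or consult the argument in \cite{CarlsonKovacs} directly.
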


 We continue this study by proving the following result, which extends the first item in \Cref{CarlsonKovacs} for group algebras over the prime field. 
 
 \begin{theorem}\label{theorem}
 	Let $G$ be a finite $p$-group, let $\F_p$ be the field of $p$ elements, and suppose that $\F_p G=B\otimes_{\F_p} C$  for some subalgebras $B$ and $C$ of $\F_p G$ such that $B$ is commutative. Then $G=\mathcal B\times  \mathcal C$ for some subgroups $\mathcal B$ and $\mathcal C$ of $G$ such that $\F_p \mathcal B\cong B$ and $\F_p \mathcal C\cong C$.
\end{theorem}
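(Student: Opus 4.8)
The plan is to first pin down the abstract structure of the commutative factor $B$ by reducing it to \Cref{CarlsonKovacs}, and then to realize that factor, together with a complement, inside $G$ itself. Observe first that since $B$ is commutative and every element of the form $b\otimes 1$ commutes with all of $1\otimes C$, the factor $B$ lies in the centre of $\F_p G$; in particular, any group basis of $B$ that happens to consist of elements of $G$ will automatically be central in $G$. Both $B$ and $C$ are local $\F_p$-algebras with residue field $\F_p$ (a tensor factor of $\F_p G$ has only the trivial idempotents, hence is local, and over the perfect field $\F_p$ the residue fields must multiply to $\F_p$), so $V(B)=1+J(B)$ is a finite abelian $p$-group.

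To recover the structure of $B$, let $\mathfrak c$ be the two-sided ideal of $\F_p G$ generated by all commutators $xy-yx$; then $\F_p G/\mathfrak c\cong \F_p G^{\mathrm{ab}}$. Because $B$ is central, every commutator of $\F_p G=B\otimes_{\F_p} C$ involves only the $C$-tensorands, so $\mathfrak c=B\otimes_{\F_p}\mathfrak c_C$ with $\mathfrak c_C$ the commutator ideal of $C$, and hence $\F_p G^{\mathrm{ab}}\cong B\otimes_{\F_p}(C/\mathfrak c_C)$. This is a tensor factorization of a \emph{commutative} modular group algebra into two commutative factors, so \Cref{CarlsonKovacs} applies and yields a direct decomposition $G^{\mathrm{ab}}=\overline{\mathcal B}\times\overline{\mathcal C}$ with $\F_p\overline{\mathcal B}\cong B$. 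In particular $B\cong \F_p\mathcal B_0$ for the finite abelian $p$-group $\mathcal B_0:=\overline{\mathcal B}$, and $\mathcal B_0$ is even a direct factor of $G^{\mathrm{ab}}$.

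It remains to lift this factorization from $G^{\mathrm{ab}}$ to $G$, which I expect to be the main obstacle. I would argue by induction on $\dim_{\F_p} B$, peeling off one cyclic direct factor of $\mathcal B_0$ at a time. Concretely, choose a cyclic direct factor $\langle\bar z\rangle\cong C_{p^a}$ of $\mathcal B_0\le G^{\mathrm{ab}}$ and try to produce a central element $z\in G$ of order $p^a$ lying in $B$ (so that $\langle z\rangle\le G\cap B$), with $z\notin\Phi(G)$ and $\langle z\rangle$ isolated (pure) in $G$; under these conditions $\langle z\rangle$ is a direct factor of $G$, say $G=\langle z\rangle\times G_1$, and correspondingly $\F_p\langle z\rangle$ splits off as a tensor factor both of $\F_p G$ and of the commutative algebra $B$. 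One then applies the inductive hypothesis to $\F_p G_1=B_1\otimes_{\F_p} C$, where $B=\F_p\langle z\rangle\otimes_{\F_p} B_1$, to obtain $G_1=\mathcal B_1\times\mathcal C$ with $\F_p\mathcal B_1\cong B_1$ and $\F_p\mathcal C\cong C$, and sets $\mathcal B=\langle z\rangle\times\mathcal B_1$.

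The hard part is the existence of the genuine group element $z$: a priori \Cref{CarlsonKovacs} produces only a cyclic factor of the quotient $G^{\mathrm{ab}}$, equivalently a unit of $V(B)$ of the correct order, and there is no reason for such a unit to be a bona fide element of $G$, or even central in $G$, before this is forced. This is exactly where the prime-field hypothesis is essential: over a larger field the extra $p$-power roots of unity in $V(B)$ yield units that behave like group elements for the algebra but do not come from $G$, which is precisely the mechanism behind $kC_4\cong kC_2\otimes_k kC_2$; over $\F_p$ these phantom units are absent, and one can hope to choose a generator of the peeled factor inside $G$. Making this precise—turning the abelian, quotient-level splitting into an honest central direct factor of $G$ that is compatible with the surviving tensor factor $C$—is the crux of the argument, and I would attack it by combining the purity criterion above with a comparison of the powers of the augmentation ideals $\aug{G}$, $J(B)$ and $J(C)$, exploiting that over $\F_p$ the Frobenius ($p$-th power) map on the central commutative subalgebra $B$ detects the order of $z$ exactly.
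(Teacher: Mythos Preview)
Your overall architecture matches the paper's: pass to the abelianization, apply \Cref{CarlsonKovacs} to identify $B$ as a group algebra $\F_p\mathcal B$ (this is exactly \Cref{prop:Babelian}\ref{prop:Babelian1}), then induct by splitting off a cyclic factor of $\mathcal B$ of maximal order $p^s$. The gap is in how you formulate the inductive step. You ask for a central $z\in G$ of order $p^s$ \emph{lying in $B$}, i.e.\ $\GEN{z}\subseteq G\cap B$; but a central subalgebra $B$ of $\F_pG$ need not contain any element of $G$ besides $1$ (already for $G$ elementary abelian of rank $2$ and $p$ odd one can tensor-factor along a ``tilted'' generator such as $(g_1-1)+(g_2-1)^2$, and then $G\cap B=\{1\}$). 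So this target is unreachable in general, and the purity/Frobenius heuristics you sketch cannot produce it. A secondary issue: even granting such a $z$, you would still need to choose the complement $G_1$ so that $\F_pG_1$ literally equals $B_1\otimes C$ inside $\F_pG$, which does not follow from $G=\GEN{z}\times G_1$ alone.

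The paper never seeks an element of $G\cap B$. It takes $b$ in the group basis of $B$ of order $p^s$ (so $b\in V(B)$, not in $G$), uses Jennings' theorem and the projection $I(G)\to I(B)$ along $I(C)\oplus I(B)I(C)$ to show $(b-1)^{p^{s-1}}\notin I(G)^{p^{s-1}+1}+I(\mho_s(G)G')\F_pG$, hence $b-1+I(G)^2\notin\ker\Lambda_G^{s-1}$, and then invokes \Cref{lemma4.11}: any complement of $\ker\Lambda_G^{s-1}$ through $b-1+I(G)^2$ is of the form $(I(H)\F_pG+I(G)^2)/I(G)^2$ for a genuine homocyclic direct factor $H$ of $G$ of exponent $p^s$. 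Only then, via the $\F_p$-specific isomorphism $G/\Phi(G)\cong I(G)/I(G)^2$, does one choose $h\in H\subseteq G$ with $h-1\equiv b-1\bmod I(G)^2$, so $G=\GEN{h}\times G_0$. The bridge back to the tensor side is ideal-theoretic rather than elementwise: the ideal $J=(b-1)\F_pG$ complements both $\F_p\mathcal B_0\otimes C$ (from $\F_pG=\F_p\GEN{b}\otimes\F_p\mathcal B_0\otimes C$) and $\F_pG_0$ (by \cite[Lemma~4.9]{GL2022}, using $J+I(G)^2=I(\GEN{h})\F_pG+I(G)^2$), whence $\F_p\mathcal B_0\otimes C\cong\F_pG_0$ and the induction closes. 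Your named tools are the right ones; the point is that they locate a direct factor of $G$ through the class $b-1+I(G)^2$, not an element of $G$ inside $B$.
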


 As the ``simplest'' question that  arise from \Cref{CarlsonKovacs}, Carlson and Kov\'acs ask the following: 

\begin{question}\cite[Section 5]{CarlsonKovacs}\label{CKQuestion}
Does the group algebra of a direct indecomposable $p$-group over a field of characteristic $p$ ever admit a nontrivial tensor factorization (as algebras)?
\end{question}

In \cite[Section 5]{CarlsonKovacs} they, in an initial attempt to tackle this question in the noncommutative case, answer their question   for the nonabelian groups of order $8$, and, to the best of our knowledge, no further progress on this topic has been made since. Their argument relies on the product formula of centers $Z(B \otimes C) = Z(B) \otimes Z(C)$. Another product formula 
%\ChAngel{$(B \otimes C)/[C, C](B\otimes C) \cong B \otimes C/[C, C]C$ 
%\sout{$(B \otimes C)/[B \otimes C, B \otimes C] \cong B/[B, B] \otimes C/[C, C]$}} 
%\ChAngel{[Observe that the second formula is a consequence of the first and in fact the first one is the one we are using]} 
 $[B\otimes C, B\otimes C] = B\otimes [C,C] + [B,B]\otimes C$ (as submodules of $A\otimes B$, see \cite{LinckelmannBook1})  may be used instead here. As an application of \Cref{theorem}, we give answer to   Carlson--Kov\'acs  question for    groups generated by at most $3$ elements and for groups with cyclic derived subgroup.

\begin{corollary}\label{corollary}
Let $G$ be a finite direct indecomposable $p$-group and let $\mathbb F_p$ be the field of $p$ elements. Then $\mathbb{F}_p G$ is indecomposable as a tensor product of proper subalgebras provided at least one of the following holds:
\begin{enumerate}
 \item $G$ can be generated by  $3$ elements,
 \item  $G'$ is cyclic.
\end{enumerate}
\end{corollary}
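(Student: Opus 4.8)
The plan is to argue by contradiction in both cases: assume $\F_p G = B \otimes_{\F_p} C$ with both $B$ and $C$ proper subalgebras, and show that one of the factors must then be commutative. Once that is established, \Cref{theorem} yields a decomposition $G = \mathcal B \times \mathcal C$ with $\F_p \mathcal B \cong B$ and $\F_p \mathcal C \cong C$; since $|\mathcal B| = \dim B$ and $|\mathcal C| = \dim C$ and both factors are proper (so $1 < \dim B,\dim C < |G|$), both $\mathcal B$ and $\mathcal C$ are nontrivial, contradicting the direct indecomposability of $G$. Throughout I would use that each factor is a local augmented $\F_p$-algebra with residue field $\F_p$: indeed $\mathrm{id}_B \otimes \epsilon_C$ exhibits $B$ as the quotient $(B \otimes C)/(B \otimes I_C)$ of the local algebra $\F_p G$, where $I_C$ denotes the augmentation ideal of $C$, and symmetrically for $C$.

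For part (1) I would count minimal generators. Writing $I_B, I_C$ for the augmentation ideals of $B$ and $C$, expanding $\F_p G = (\F_p \oplus I_B) \otimes (\F_p \oplus I_C)$ and observing that $I_B \otimes I_C$ lies in the square of the augmentation ideal of $\F_p G$ gives $I_{\F_pG}/I_{\F_pG}^2 \cong I_B/I_B^2 \oplus I_C/I_C^2$. Combined with $\dim I_{\F_pG}/I_{\F_pG}^2 = \dG(G)$, this yields $\dG(G) = \dG(B) + \dG(C)$, where $\dG(B) = \dim I_B/I_B^2$ is the minimal number of algebra generators of $B$. A local $\F_p$-algebra with $\dim I_B/I_B^2 \le 1$ is generated by a single element and hence commutative, so any noncommutative factor has $\dG \ge 2$. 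If both $B$ and $C$ were noncommutative we would obtain $\dG(G) \ge 4$, contradicting $\dG(G) \le 3$; therefore one factor is commutative.

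For part (2) the key object is the two-sided ideal generated by $[\F_p G, \F_p G]$. On one hand this ideal is the kernel of $\F_p G \to \F_p[G/G']$, that is, the relative augmentation ideal $\aug{G'}\F_p G$; when $G'$ is cyclic, say $G' = \langle t\rangle$, it equals $(t-1)\F_p G$ and is thus cyclic as a right $\F_p G$-module. On the other hand, the product formula $[B\otimes C, B\otimes C] = B \otimes [C,C] + [B,B]\otimes C$ identifies the same ideal with $M := J_B \otimes C + B \otimes J_C$, where $J_B$ and $J_C$ are the ideals of $B$ and $C$ generated by their commutators, so that $B$ is commutative exactly when $J_B = 0$. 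I would then bound the number of right generators of $M$ from below by $\dim M/MI_{\F_pG}$. Applying the two augmentation projections $\mathrm{id}_B \otimes \epsilon_C$ and $\epsilon_B \otimes \mathrm{id}_C$, one checks that $M/MI_{\F_pG}$ surjects onto $J_B/J_BI_B \oplus J_C/J_CI_C$, the elements $x \otimes 1$ and $1 \otimes y$ (with $x \in J_B$, $y \in J_C$) hitting the two summands. By Nakayama, $J_B \ne 0$ forces $J_B/J_BI_B \ne 0$, and likewise for $C$; hence if both $B$ and $C$ were noncommutative, then $M$ would require at least two generators, contradicting that it is cyclic. So one factor is commutative.

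The main obstacle is part (2): identifying the commutator ideal with $\aug{G'}\F_p G$ and exploiting that cyclicity of $G'$ makes it a cyclic module, and then extracting from the tensor decomposition two independent module generators via the augmentation projections. Part (1) is comparatively soft, resting only on the additivity of the minimal number of generators under $\otimes$ and the fact that a one-generated local algebra is commutative.
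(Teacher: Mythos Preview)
Your proof is correct and follows the same strategy as the paper's: in both cases you show one tensor factor must be commutative and then invoke \Cref{theorem}. Part~(1) matches the paper's argument exactly; for part~(2) the paper likewise exploits that $I(G')\F_pG$ is a cyclic right module when $G'$ is cyclic, but instead of your Nakayama count it explicitly locates a generator $[a_1,a_2]$ with both $a_i$ in the same factor $I(C)$, whence $I(G')\F_pG\subseteq I(C)\F_pG$ and $[I(B),I(B)]\subseteq I(B)\cap(I(C)\oplus I(B)I(C))=0$.
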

%  \ChTaro{
%    I would formulate these similar to above theorems: ``if $\F_p G = B \otimes C$ then $B \cong \F_p H$, $C \cong \F_p K$, and $G = H \times K$ for some $H, K \le G$ \dots'' for three reasons.
%    1. Proof goes direct way. Needless proof by contradiction can be omitted.
%    2. We don't need to exclude the trivial tensor factor $\F_p$ separately.
%    3. We don't need to exclude the trivial group algebra $\F_p$, not indecomposable, separately.
%}
%\ChAngel{[We prefer to leave it as it is for the following reasons: It answers directly \Cref{CKQuestion}, the two versions are equivalent and, the alternative version does not look sufficiently different to \Cref{theorem}.]}
Observe that our result includes many finite $p$-groups, such as those that are metacyclic  , extra-special, of maximal class, or of order at most $p^5$.

\bigskip
 We start setting basic notation. Throughout 
  $p$ is a prime number, $k$ is a field of characteristic $p$, $G$ is a finite $p$-group and $C_n$ denotes a cyclic group of order $n$. 
From \Cref{lemma4.11} on, $k$ will be always the field $\F_p$ with $p$ elements. 
By convention, $\otimes$ means $\otimes_k$.

 Let $A$ be a $k$-algebra. The group of units of $A$ is denoted $\U(A)$. 
If $X$ and $Y$ are subsets of a $k$-algebra $A$ then  $XA$ denotes the right ideal of $A$ generated by $X$ and  $[X,Y]$ denotes the $k$-span of the set of Lie commutators $[x,y]=xy-yx$ with $x\in X$ and $y\in Y$.

Recall that an augmented $k$-algebra is a pair formed by a unital $k$-algebra and an ideal $I(A)$ of $A$ of codimension $1$, called augmentation ideal of $A$.
 We write $V(A)=\U(A)\cap (1+I(A))$, a normal subgroup of $\U(A)$.
%\ChTaro{Either $\mathcal{V}(A)$ and $\mathcal{U}(A)$, or $V(A)$ and $U(A)$.}
%\ChAngel{[The two notations are well established in many references in the field, so we prefer not to change it, but we understand your point, so if you insist we will accept your proposal]}
If $I(A)$ is nilpotent, then $V(A)$ is a $p$-group of exponent $p^s$, where $s$ is the smallest non-negative integer with $a^{p^s}=0$ for every $a\in I(A)$.
Moreover, every subalgebra $B$ of $A$ is augmented with augmentation ideal $I(B)=B\cap I(A)$, and if $A=B\otimes C$ with $B$ and $C$ subalgebras of $A$ then 
  \begin{equation}\label{decomposition}
A=k\oplus I(B)\oplus I(C)\oplus I(B) I(C)
\qand I(A)=I(B)\oplus  I(C)\oplus I(B)I(C).
\end{equation} 

For example, the group algebra $kG$ is an augmented algebra and its  augmentation ideal is its Jacobson radical, which is nilpotent and generated by the elements of the form $g-1$ with $g\in G$.
We abbreviate $I(kG)$ by $I(G)$. Observe that if $N$ is a normal subgroup of $G$ then $I(N)kG$ is the kernel of the natural homomorphism $kG \to k(G/N)$.

  We use the notation  $\ZZ(G)$ for the center of $G$, and for a positive integer $i$, 
  $$\Omega_i(G)=\GEN{g\in G : g^{p^i}=1}, \quad \mho_i(G)=\GEN{g^{p^i}: g\in G} $$
and
  $$R_i(G)=\frac{\Omega_i(\ZZ(G)) \mho_i(G) G'}{\mho_i(G) G'}.$$   
We also use analogous versions of the first   three notations in the context of algebras, namely, for a non-necessarily unital  $k$-algebra $A$,  $\Omega_i(A)$ denotes the subalgebra of $A$ generated by the elements $a\in A$ such that $a^{p^i}=0$, and $\mho_i(A)$ denotes the subalgebra of $A$ generated by the elements of the form $a^{p^i}$ with $a\in A$.

Let $H_i(G)$ denote a homocyclic component of $G$ of exponent $p^i$, in the sense of \cite{GL2022}, i.e. $H_i(G)$ is maximal among the direct factors of $G$ that are isomorphic to a direct product of cyclic groups of order $p^i$.
This is unique up to isomorphism and we abuse notation by referring to it as ``the'' homocyclic component of $G$.

  \begin{lemma} \label{lemma:Ideals} For every pair of positive integers $i$  and $j$  we have
  	\begin{enumerate}
  		\item\label{lemma:Ideals1}
  		$I(\mho_i(G)\;G')kG = \mho_i(I(G))kG+I(G')kG$.
%
%   		$I(\mho_i(G)G')kG$ is the ideal of $kG$ generated by
%   		$$ \{ z^{p^i} :z\in I( G)\} \cup I(G')\ChAngel{\cancel{kG}}.$$

\item \label{IdealsUnoymedio}
$I(\Omega_i(\ZZ(G))G')kG = 	\Omega_i(\ZZ(I(G)))kG + I(G')kG $.

  		\item\label{lemma:Ideals2}
  		$I(\Omega_i(\ZZ(G)) \; \mho_j(G) \;G')kG =
  		\Omega_i(\ZZ(I(G)))kG + \mho_j(I(G))kG + I(G')kG$.

%   		$
%   		I(\Omega_i(\ZZ(G)) \mho_j(G) G')kG$ is the ideal generated by
%   		$$ \{ z^{p^i} :z\in G\} \cup G'\ChAngel{)\cancel{kG}} \cup \{  z \in \ChAngel{\cancel{\ZZ(kG)\cap G)} \ZZ(I(G))} : z^{p^i}=0\}.$$

	\item \label{lemma:Ideals3}
  	$G$ has a cyclic direct factor of order $p^i$ if and only if
  	$$ \exp\left( 1+  I\left( R_i(G)  \right) k \left(  \frac{G}{\mho_i(G)G'}  \right) \right)\geq p^i.$$
  	\end{enumerate}
  \end{lemma}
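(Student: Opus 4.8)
The plan is to deduce the third identity from the first two, to prove the first two by reducing modulo $I(G')kG$ to the commutative setting, and to prove the fourth by a Frobenius computation combined with a splitting argument for central elements. I first record the elementary fact that for normal subgroups $M,N$ of $G$ one has $I(MN)kG = I(M)kG + I(N)kG$ (both sides are $\ker(kG \to k(G/MN))$, and $mn-1 = (m-1)+(n-1)+(m-1)(n-1)$ shows the generators of the left side lie in the right). Using this together with the set identity $X+Y+Z = (X+Z)+(Y+Z)$ applied with $Z = I(G')kG$, item \eqref{lemma:Ideals3}... rather, item \eqref{lemma:Ideals2} follows at once from \eqref{lemma:Ideals1} and \eqref{IdealsUnoymedio}: the right-hand side of \eqref{lemma:Ideals2} splits as $[\Omega_i(\ZZ(I(G)))kG + I(G')kG] + [\mho_j(I(G))kG + I(G')kG]$, which by \eqref{IdealsUnoymedio} and \eqref{lemma:Ideals1} equals $I(\Omega_i(\ZZ(G))G')kG + I(\mho_j(G)G')kG = I(\Omega_i(\ZZ(G))\mho_j(G)G')kG$. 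Throughout I use that, since both sides of \eqref{lemma:Ideals1} and of \eqref{IdealsUnoymedio} contain $I(G')kG = \ker(kG \to k(G/G'))$, each identity is equivalent to equality of the images in $k(G/G')$.

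For \eqref{lemma:Ideals1}, after this reduction the claim becomes $I(\mho_i(A))kA = \mho_i(I(A))kA$ for the abelian group $A = G/G'$ (using $\mho_i(G)G'/G' = \mho_i(A)$ and that the surjection $kG \to kA$ carries $\mho_i(I(G))$ onto $\mho_i(I(A))$). The inclusion $\supseteq$ is immediate from the identity $(g-1)^{p^i} = g^{p^i}-1$, which shows $g^{p^i}-1 \in \mho_i(I(A))$ and hence that the ideal $I(\mho_i(A))kA$ generated by these elements is contained in $\mho_i(I(A))kA$. For $\subseteq$ I would note that $A/\mho_i(A)$ has exponent dividing $p^i$, so in $k(A/\mho_i(A))$ every augmentation element has vanishing $p^i$-th power; thus each generator $a^{p^i}$ of $\mho_i(I(A))$ maps to $0$, i.e. lies in $\ker(kA \to k(A/\mho_i(A))) = I(\mho_i(A))kA$. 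The same ideas give the abelian input needed for \eqref{IdealsUnoymedio}: for abelian $A$ the set $\{x \in kA : x^{p^i}=0\}$ is an ideal (by the commutative Frobenius $(x+y)^{p^i}=x^{p^i}+y^{p^i}$), and writing $kA \cong \bigotimes_j k[x_j]/(x_j^{p^{e_j}})$ one computes it explicitly to be the ideal generated by the $x_j^{p^{\max(e_j-i,0)}}$, which is exactly $\ker(kA \to k(A/\Omega_i(A))) = I(\Omega_i(A))kA$; consequently $\{x : x^{p^i}=0\} \subseteq I(\Omega_i(A))kA$.

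Now for \eqref{IdealsUnoymedio}. The inclusion $\supseteq$ is direct: for $z \in \ZZ(G)$ with $z^{p^i}=1$ the element $z-1$ is central in $kG$ with $(z-1)^{p^i}=z^{p^i}-1=0$, so $z-1 \in \Omega_i(\ZZ(I(G)))$, and since $\Omega_i(\ZZ(I(G)))$ is a subalgebra, telescoping products gives $h-1 \in \Omega_i(\ZZ(I(G)))$ for every $h \in \Omega_i(\ZZ(G))$; with $I(\Omega_i(\ZZ(G))G')kG = I(\Omega_i(\ZZ(G)))kG + I(G')kG$ this yields the inclusion. The reverse inclusion is the heart of the lemma: it suffices to show that every $a \in \ZZ(I(G))$ with $a^{p^i}=0$ lies in $\ker(kG \to k\overline{G})$, where $\overline{G} = G/\Omega_i(\ZZ(G))G'$. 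Write $a = a_0 + a_1$, where $a_0 \in k\ZZ(G)$ collects the coefficients on central group elements and $a_1$ is supported on the non-central conjugacy class sums. Non-central class sums die under the projection $\pi$ to $k\overline{G}$ (a class $C$ maps to $|C|\,\overline{x}_C$ and $p \mid |C|$), so $\pi(a)=\pi(a_0)$. The key step — which I expect to be the main obstacle — is to prove $a_0^{p^i}=0$ already in $k\ZZ(G)$. From $a^{p^i}=0$ and the commuting Frobenius one has $a_0^{p^i} = -a_1^{p^i} = -\sum_C \lambda_C^{p^i}\widehat{C}^{p^i}$ (the class sums are central, hence commute). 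For a non-central class $C$, the coefficient of a central element $z$ in $\widehat{C}^{p^i}$ counts, modulo $p$, the tuples $(x_1,\dots,x_{p^i}) \in C^{p^i}$ with $x_1\cdots x_{p^i}=z$; simultaneous $G$-conjugation acts on this set (it fixes the central product $z$) with no fixed points (the $x_l$ are non-central), so every orbit has size divisible by $p$ and the coefficient vanishes. Thus $a_1^{p^i}$ has no central-group support, whereas $a_0^{p^i}$ is entirely central-group supported; being negatives, both vanish. Then $a_0^{p^i}=0$ in $k\ZZ(G)$, so by the abelian input $a_0 \in I(\Omega_i(\ZZ(G)))k\ZZ(G) \subseteq \ker\pi$, and therefore $a \in \ker\pi$.

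Finally, for \eqref{lemma:Ideals3} set $\overline{G} = G/\mho_i(G)G'$, an abelian group of exponent dividing $p^i$, and $R = R_i(G) \le \overline{G}$. I would first show $\exp\bigl(1 + I(R)k\overline{G}\bigr) = \exp(R)$: the inclusion $R \subseteq 1 + I(R)k\overline{G}$ gives $\ge$, while writing a general ideal element as $x = \sum_l (r_l-1)\beta_l$ and using $(r_l-1)^{\exp(R)} = r_l^{\exp(R)}-1 = 0$ with the commutative Frobenius gives $x^{\exp(R)}=0$, so $(1+x)^{\exp(R)}=1$ and hence $\le$. Thus the stated inequality is equivalent to $\exp(R)=p^i$, i.e. to the existence of $z \in \Omega_i(\ZZ(G))$ whose image has order $p^i$ in $\overline{G}$; since $z^{p^i}=1$ automatically, this says exactly that there is a central $z$ with $z^{p^i}=1$ but $z^{p^{i-1}} \notin \mho_i(G)G'$. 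It remains to match this with a cyclic direct factor of order $p^i$. If $G = \langle c\rangle \times K$ with $|c|=p^i$, then $c$ is central and $\mho_i(G)G' = \mho_i(K)K' \le K$, so $z=c$ works. Conversely, such a $z$ is central of order $p^i$ whose image has maximal order in the abelian group $\overline{G}$ of exponent $p^i$; hence $\langle \overline{z}\rangle$ is a direct factor of $\overline{G}$, and the associated projection $\overline{G} \to \langle \overline{z}\rangle$, pulled back along $G \to \overline{G}$ and identified with $\langle z\rangle$, is a retraction $\phi: G \to \langle z\rangle$ with $\phi(z)=z$; as $z$ is central this gives $G = \langle z\rangle \times \ker\phi$ with $\langle z\rangle \cong C_{p^i}$.
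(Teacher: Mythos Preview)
Your proof is correct and considerably more self-contained than the paper's. The paper simply cites \cite{MSS23} for \eqref{lemma:Ideals1} and \eqref{IdealsUnoymedio}, proves \eqref{lemma:Ideals2} directly via Sandling's decomposition $\ZZ(I(G)) = I(\ZZ(G)) \oplus ([kG,kG]\cap\ZZ(kG))$ together with the identification $\ker(x \mapsto x^{p^i}) = I(\Omega_i(\ZZ(G)))k\ZZ(G)$ from \cite{Seh78}, and for \eqref{lemma:Ideals3} invokes \cite[Lemma~4.3]{GL2022} (namely $H_i(G)\cong H_i(R_i(G))$) to reduce to the observation $\exp\bigl(1+I(R_i(G))k\overline G\bigr)=\exp(R_i(G))$.

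Your route differs in three places. First, you obtain \eqref{lemma:Ideals2} from \eqref{lemma:Ideals1} and \eqref{IdealsUnoymedio} by pure set arithmetic, which is cleaner than the paper's direct computation. Second, for \eqref{IdealsUnoymedio} you replace Sandling's decomposition by the explicit class-sum splitting of $\ZZ(kG)$ and handle the key step ($a_0^{p^i}=0$) with an orbit-counting argument on $C^{p^i}$ under simultaneous conjugation; this is effectively a bare-hands proof of what Sandling's lemma encodes (in the modular $p$-group setting the non-central class sums span $[kG,kG]\cap\ZZ(kG)$, and $[kG,kG]$ is stable under $p$-th powers), but your argument stands on its own without those citations. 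Third, for \eqref{lemma:Ideals3} you bypass the cited lemma on homocyclic components entirely with a direct retraction: a central $z$ of order $p^i$ whose image generates a maximal cyclic summand of $G/\mho_i(G)G'$ visibly splits off as a direct factor of $G$. Each substitution trades a little extra length for independence from the literature. One cosmetic slip: in your abelian argument for \eqref{lemma:Ideals1} the labels ``$\subseteq$'' and ``$\supseteq$'' are interchanged, though the two arguments themselves are correct.
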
  

\begin{proof}
%The proof of the first two is inspired by that of \cite[Lemma 3.6]{GL2022}.

% \eqref{lemma:Ideals1} Since $I(G')kG$ is contained in both ideals, we may quotient it out and assume that $G$ is abelian. For an abelian group it is clear that $I(\mho_i(G)) =\mho_i(I(G))$.

\eqref{lemma:Ideals1} is \cite[Lemma~2.6]{MSS23} and \eqref{IdealsUnoymedio} is a particular case of \cite[Lemma~2.3]{MSS23}.
  	
\eqref{lemma:Ideals2} 
 Let $\pi:kG\to k(G)/(I(\mho_j(G))kG+I(G')kG)$ be the natural homomorphism and $\lambda:k Z(G) \to k Z(G)$ be the map given by $\lambda(x)=x^{p^i}$. By \cite[Lemma 6.10]{Sandling85},
$$\ZZ(I(G)) = I(\ZZ(G))\oplus [kG,kG] \cap \ZZ(kG)$$
and, by \cite[Proposition III.6.1]{Seh78},
	$$\ker(\lambda)= I(\Omega_i(\ZZ(G)))k\ZZ(G).$$ 
Thus 
\begin{eqnarray*}
\pi(\Omega_i(Z(I(G)))) &=& \pi(\Omega_i(I(\ZZ(G))\oplus [kG,kG] \cap \ZZ(kG)))
= \pi(\Omega_i(I(\ZZ(G)))\oplus \Omega_i([kG,kG] \cap \ZZ(k(G)))) \\
&=& \pi(\Omega_i(I(\ZZ(G)))) = \pi(\ker \lambda) = \pi(I(\Omega_i(\ZZ(G)))k\ZZ(G)),	
\end{eqnarray*}
and the result follows.  

\eqref{lemma:Ideals3}
By \cite[Lemma 4.3]{GL2022}, $H_i(G)\cong H_i ( R_i(G))$.
  Observe that the group $R_i(G)$ is abelian of exponent at most $p^i$. Hence $G$ has an abelian direct factor  of exponent $p^i$ if and only if $\exp(R_i(G))\geq p^i$. Now, since $G/\mho_i(G)G'$ is abelian, we have that
  $$\exp\left( 1+  I\left( R_i(G)  \right) k \left(  \frac{G}{\mho_i(G)G'}  \right)  \right)= \exp(R_i(G)).$$
  \end{proof}

\begin{proposition} \label{prop:Babelian}
	  Suppose that $kG=B\otimes C$, where $B$ and $C$ are proper subalgebras of $kG$ with $B$ commutative. Then
\begin{enumerate}[(a)]
\item \label{eq:ExpansionCommutator}
   $[kG,kG] =[I(C),I(C)] + [I(C),I(C)]I(B) \subseteq I(C)\oplus I(B)I(C)$.
\item \label{prop:Babelian1} $B$,   $C/[C,C]C$  and $kG/[C,C]kG$  are group algebras. 
\item \label{Bembeds} If $p^s$ is the exponent of $V(B)$, then $I(\mho_s(G)G')kG \subseteq I(C)\oplus I(B)I(C)$. 
\item \label{prop:Babelian2}
	 $G$ has a cyclic direct factor of order $\exp(V(B))$.
\end{enumerate}
\end{proposition}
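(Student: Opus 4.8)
The plan is to establish the four items in order, using \eqref{decomposition}, the commutator formula $[B\otimes C,B\otimes C]=B\otimes[C,C]+[B,B]\otimes C$, Carlson--Kov\'acs (\Cref{CarlsonKovacs}) and \Cref{lemma:Ideals} as the main inputs. For \eqref{eq:ExpansionCommutator} I would feed $[B,B]=0$ into the commutator formula to get $[kG,kG]=B\otimes[C,C]$. Since scalars are central, $[C,C]=[I(C),I(C)]$; and because $B$ and $C$ commute elementwise in $kG$, splitting $B=k\oplus I(B)$ rewrites $B\otimes[C,C]$ as $[I(C),I(C)]+[I(C),I(C)]I(B)$. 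The stated inclusion then follows from $[I(C),I(C)]\subseteq I(C)$, $[I(C),I(C)]I(B)\subseteq I(B)I(C)$, and the directness of the sum in \eqref{decomposition}.

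For \eqref{prop:Babelian1} the crux is that the two-sided ideal generated by commutators is controlled by $C$. The identity $c[x,y]=[cx,y]-[c,y]x$ shows that $[C,C]C$ is a two-sided ideal of $C$, and a similar manipulation, using that $B$ centralizes $C$, shows that $[C,C]kG$ is a two-sided ideal of $kG$; combined with \eqref{eq:ExpansionCommutator} this gives $[C,C]kG=[kG,kG]kG=I(G')kG$, so $kG/[C,C]kG\cong k(G/G')$ is a commutative group algebra. Using again that $B$ and $C$ commute elementwise, $[C,C]kG=B\otimes([C,C]C)$, hence $k(G/G')\cong B\otimes\bigl(C/[C,C]C\bigr)$. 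Both tensor factors are commutative, so \Cref{CarlsonKovacs} applies and shows that $B$ and $C/[C,C]C$ (and of course $k(G/G')$ itself) are group algebras.

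For \eqref{Bembeds} I would use the algebra projection $\varphi=\mathrm{id}_B\otimes\varepsilon_C\colon kG\to B$, where $\varepsilon_C$ is the augmentation of $C$; its kernel is $B\otimes I(C)=I(C)\oplus I(B)I(C)$. Since $B$ is commutative of characteristic $p$ and $\exp V(B)=p^s$, every $b\in I(B)$ satisfies $(1+b)^{p^s}=1+b^{p^s}=1$, so $b^{p^s}=0$. As $\varphi$ preserves augmentation, $\varphi(a)\in I(B)$ for $a\in I(G)$, whence $\varphi(a^{p^s})=\varphi(a)^{p^s}=0$ and $\mho_s(I(G))\subseteq\ker\varphi$; moreover $\varphi$ kills $I(G')kG$ because $B$ is commutative. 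Now part \eqref{lemma:Ideals1} of \Cref{lemma:Ideals} gives $I(\mho_s(G)G')kG=\mho_s(I(G))kG+I(G')kG\subseteq\ker\varphi=I(C)\oplus I(B)I(C)$.

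For \eqref{prop:Babelian2} I would transport a suitable central unit to $k\bar G$, where $\bar G=G/\mho_s(G)G'$. Pick $u\in V(B)$ of order $p^s$; it is central in $kG$, and $u-1\in\Omega_s(\ZZ(I(G)))$ because $(u-1)^{p^s}=u^{p^s}-1=0$. By part \eqref{IdealsUnoymedio} of \Cref{lemma:Ideals}, $u-1\in I(\Omega_s(\ZZ(G))G')kG$, so the projection $\psi\colon kG\to k\bar G$ sends it into $I(R_s(G))k\bar G$; that is, $\psi(u)\in 1+I(R_s(G))k\bar G$. One then checks that $\psi(u)$ still has order $p^s$: its order is at most $p^s$ since $\bar G$ has exponent dividing $p^s$ and $k=\F_p$ forces $\exp V(k\bar G)=\exp\bar G$, and at least $p^s$ because $\ker\psi\subseteq\ker\varphi$ by \eqref{Bembeds}, while $\varphi$ restricts to the identity on $B$, so $\varphi(u^{p^{s-1}})=u^{p^{s-1}}\neq 1$. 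Hence $\exp\bigl(1+I(R_s(G))k\bar G\bigr)\geq p^s$, and part \eqref{lemma:Ideals3} of \Cref{lemma:Ideals} produces the cyclic direct factor. The main obstacle lies exactly here: the implication fails over general fields (when $k$ has a primitive fourth root of unity, $kC_4\cong kC_2\otimes kC_2$ while $C_4$ has no $C_2$ direct factor), so the argument must use $k=\F_p$ essentially, through $\exp V(\F_p A)=\exp A$ for abelian $A$, and must deposit the transported unit inside $1+I(R_s(G))k\bar G$ rather than merely detecting an order-$p^s$ element of $\bar G$.
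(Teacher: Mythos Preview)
Your argument is correct and follows essentially the same route as the paper: the same use of the centrality of $B$ and \eqref{decomposition} for \ref{eq:ExpansionCommutator}, the same reduction $k(G/G')\cong B\otimes(C/[C,C]C)$ feeding into \Cref{CarlsonKovacs} for \ref{prop:Babelian1}, the same appeal to \Cref{lemma:Ideals}\eqref{lemma:Ideals1} for \ref{Bembeds}, and the same ``lift a maximal-order unit of $B$, push it to $k\bar G$, land in $1+I(R_s(G))k\bar G$, invoke \Cref{lemma:Ideals}\eqref{lemma:Ideals3}'' for \ref{prop:Babelian2}. Your explicit use of the projection $\varphi=\mathrm{id}_B\otimes\varepsilon_C$ with kernel $I(C)\oplus I(B)I(C)$ is just a clean repackaging of what the paper does by intersecting with the direct-sum decomposition.

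One correction to your closing commentary: the proposition is stated and proved for an \emph{arbitrary} field $k$ of characteristic $p$ (the restriction to $\F_p$ in the paper begins only after this proposition), and your own proof already works in that generality. The step ``$\exp V(k\bar G)=\exp\bar G$'' holds over any such $k$ because $\bar G$ is abelian and Frobenius is additive; more simply, $\psi(u)^{p^s}=\psi(u^{p^s})=1$ needs nothing about $k$. The example $kC_4\cong kC_2\otimes kC_2$ you cite is \emph{non-modular} (a field of characteristic $2$ has no primitive fourth root of unity), so it is not a counterexample to \ref{prop:Babelian2} and there is no obstacle here to circumvent.
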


\begin{proof}
\ref{eq:ExpansionCommutator} Since $B$ is central in $kG$, by equation \eqref{decomposition},
  	\begin{align*}
  		[kG,kG]% &=[k\oplus I(B)\oplus I(C)\oplus I(B)I(C),  k\oplus I(B)\oplus I(C)\oplus I(B)I(C)]  \\
  		&
  		= [I(C), I(C)]+ [I(C), I(B)I(C)]+ [I(B)I(C), I(B)I(C)]  \\
  		&=[I(C),I(C)] + [I(C),I(C)]I(B)\subseteq I(C)\oplus I(B)I(C).
  	\end{align*}
%
% $$[kG,kG] =   [I(C), I(C)] + [I(C), I(B)I(C)]+ [I(B) I(C), I(B)I(C)]
%   \subseteq I(C)\oplus I(B)I(C).$$

\ref{prop:Babelian1}  
It is well-known that $k(G/G')\cong kG/I(G')kG$ and $[kG,kG]kG= I(G')kG$. 
On the other hand, there is a natural epimorphism
  	$B\otimes C \to B \otimes (C/[C,C]C)$ given by $b\otimes c\mapsto b\otimes(c+[C,C]C)$. Since its kernel is   $[C,C]C$ and the target algebra is commutative, $[kG,kG]kG=[C,C]C=[C,C]kG$. Thus $k(G/G')\cong kG/[C,C]kG \cong B\otimes C/[C,C]C$.  Then the result follows from \Cref{CarlsonKovacs}.

\ref{Bembeds}
Let $p^s$ be the exponent of $V(B)$. 
Since $B$ is central in $kG$ and $I(C)\oplus I(B)I(C)$ is an ideal of $A$, $z^{p^s}\in I(C)\oplus I(B)I(C)$ for every $z\in I(  G )$. 
This, \ref{eq:ExpansionCommutator} and \Cref{lemma:Ideals}\eqref{lemma:Ideals1} show  that $I(\mho_s(G)G')kG \subseteq I(C)\oplus I(B)I(C)$. 

\ref{prop:Babelian2}
Let $x\in I(B)$ such that $1+x$ reaches the exponent of $V(B)$, i.e., $|1+x|=p^s$.
By  \eqref{decomposition} and \eqref{Bembeds}, $B\cap I(\mho_s(G)G')kG=0$, so
$$|1+x+I(\mho_s(G)G')kG|=p^s.  $$
On the other hand,
	$$x\in \{  z\in \ZZ(kG)\cap I(G) : z^{p^s}=0\} \subseteq I(\Omega_s(\ZZ(G))\mho_s(G)G')kG,$$
by \Cref{lemma:Ideals}\eqref{lemma:Ideals2},
so
$$1+x+I(\mho_s(G)G')kG \in 1+\frac{I(\Omega_s(\ZZ(G)) \mho_s(G) G')kG}{I(\mho_s(G) G')kG}\cong 1+  I\left( R_s(G) \right) k \left(  \frac{G}{\mho_s(G)G'}  \right). $$
Thus   the latter  group has an element of order $p^s$, hence  \Cref{lemma:Ideals}\eqref{lemma:Ideals3} yields the result.
\end{proof}

%
% A much stronger consequence of a positive answer to \Cref{Question:ysinoBabelian1} is the following.
%
% \begin{remark}
% A positive answer to \Cref{Question:ysinoBabelian1} implies a Krull-Schmidt type property for tensor factorizations of group algebra, namely: if $kG=A_1\otimes \dots \otimes A_n=B_1\otimes \dots \otimes B_m$, with each $A_i, B_j$ tensor indecomposable, then $n=m$ and $A_i\cong B_i$ for $i=1,\dots, n$, after possibly a reordering of the indices of the $B_i$'s. Indeed, if $kG=A_1\otimes A_2=B_1\otimes B_2 $, then
% \end{remark}

In the remainder of the paper $k=\F_p$ and consequently $I(H)$ denotes the augmentation ideal of $\F_p H$, for each finite $p$-group $H$. 

 \begin{lemma}\cite[Lemma 4.11]{GL2022}\label{lemma4.11} Let $V$ be a subspace of $I(G)$ containing $I(G)^2$, and consider the map
\begin{align*}
	\Lambda^{s-1}_G: \frac{I(\Omega_s(\ZZ(G))G')\mathbb F_pG+I(G)^2}{I(G)^2} &\to \frac{I(G)^{p^{s-1}}+ I(\mho_s(G)G')\mathbb F_p G}{I(G)^{p^{s-1}+1}+ I(\mho_s(G)G')\mathbb F_p G} \\
	 z+ I(G)^2&\mapsto z^{p^{s-1}}+I(G)^{p^{s-1}+1}+ I(\mho_s(G)G')\mathbb F_p G.
\end{align*}
Then the following conditions are equivalent:
\begin{enumerate}
 \item There is a direct sum decomposition
 $$ \frac{V}{I(G)^2} \oplus \ker \Lambda_G^{s-1}= \frac{I(\Omega_s(\ZZ(G))G')\mathbb F_pG+I(G)^2}{I(G)^2}.$$
 \item There exists a decomposition $G=H\times K$ with $H$ homocyclic of exponent $p^s$ and $K$ not admitting any cyclic direct factor of order $p^s$ such that
 $$V=I(H)\mathbb F_pG+I(G)^2.  $$
\end{enumerate}

 \end{lemma}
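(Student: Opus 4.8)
The plan is to translate both conditions into the combinatorics of the associated graded algebra of $\F_p G$ together with the homocyclic structure of $G$, and to read the equivalence off a single rank computation. I would first fix the standard dictionary: via $g-1\mapsto g\,\mho_1(G)G'$ one has $I(G)/I(G)^2\cong G/\mho_1(G)G'$, and for a normal subgroup $N$ the subspace $(I(N)\F_p G+I(G)^2)/I(G)^2$ corresponds to $N\mho_1(G)G'/\mho_1(G)G'$. Since $G'\subseteq\mho_1(G)G'=G\cap(1+I(G)^2)$, the domain
$$W:=\frac{I(\Omega_s(\ZZ(G))G')\F_p G+I(G)^2}{I(G)^2}$$
is spanned by the classes $\ov{g-1}$ with $g\in\Omega_s(\ZZ(G))$. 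As these $g$ are central, the binomial identity $\binom{p^{s-1}}{j}\equiv 0 \pmod p$ for $0<j<p^{s-1}$ shows that $z\mapsto z^{p^{s-1}}$ is additive on $W$ and independent of the representative modulo $I(G)^{p^{s-1}+1}$, so that $\Lambda^{s-1}_G$ is a well-defined linear map.

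For the target I would project along $\F_p G\to\F_p\ov G$ with $\ov G:=G/\mho_s(G)G'$, which identifies the codomain of $\Lambda^{s-1}_G$ with the degree-$p^{s-1}$ component $I(\ov G)^{p^{s-1}}/I(\ov G)^{p^{s-1}+1}$ of the associated graded algebra of the abelian group $\ov G$ of exponent at most $p^s$. Writing $\ov G=\prod_i\GEN{\ov{g_i}}$ with $|\ov{g_i}|=p^{e_i}$, this graded algebra is the truncated polynomial algebra on the elements $x_i:=\ov{g_i}-1$ subject to $x_i^{p^{e_i}}=0$. If a central $g$ has degree-one image $\sum_i c_i x_i$, then Frobenius gives $\Lambda^{s-1}_G(\ov{g-1})=\sum_i c_i\,x_i^{p^{s-1}}$, and $x_i^{p^{s-1}}\neq 0$ exactly when $e_i=s$. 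Hence the image of $\Lambda^{s-1}_G$ is spanned by those $x_i^{p^{s-1}}$ with $e_i=s$ that are realised by central elements, so that $\rk\Lambda^{s-1}_G$ counts the cyclic factors of order $p^s$ coming from $\Omega_s(\ZZ(G))$, which through $R_s(G)$ and \cite[Lemma 4.3]{GL2022} equals $\rk H_s(G)$; moreover $\ker\Lambda^{s-1}_G$ is exactly the span of the classes supported on the factors of exponent $<p^s$.

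With this description both implications become short. For (2)$\Rightarrow$(1): if $G=H\times K$ with $H$ homocyclic of exponent $p^s$, then $H$ is abelian and centralizes $K$, so $H\subseteq\Omega_s(\ZZ(G))$ and $V/I(G)^2=(I(H)\F_p G+I(G)^2)/I(G)^2\subseteq W$; the $C_{p^s}$-generators of $H$ map to independent elements of the image, giving $V/I(G)^2\cap\ker\Lambda^{s-1}_G=0$, and since $K$ contributes no factor of order $p^s$ the count $\dim V/I(G)^2=\rk H_s(G)=\rk\Lambda^{s-1}_G=\dim W-\dim\ker\Lambda^{s-1}_G$ forces the direct sum to exhaust $W$. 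For (1)$\Rightarrow$(2): the decomposition gives $\dim V/I(G)^2=\rk\Lambda^{s-1}_G=\rk H_s(G)$, and I would lift a basis of $V/I(G)^2$ to central elements $g_1,\dots,g_r$ of order $p^s$ whose powers $g_j^{p^{s-1}}$ are independent modulo $\mho_s(G)G'$, show that $H=\GEN{g_1,\dots,g_r}\cong C_{p^s}^{\,r}$, and split $H$ off as a direct factor with complement $K$ having no cyclic factor of order $p^s$ and with $V=I(H)\F_p G+I(G)^2$.

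I expect the main obstacle to lie in this last step: upgrading the purely linear-algebraic complement $V/I(G)^2$ to an honest group-theoretic direct factor. Producing central elements of order $p^s$ with independent $p^{s-1}$-th powers modulo $\mho_s(G)G'$ is routine, but verifying that they generate a homocyclic subgroup that genuinely splits off—so that the complement inherits no cyclic factor of order $p^s$—requires the detection of homocyclic components through $R_s(G)$ and \cite[Lemma 4.3]{GL2022} rather than mere rank bookkeeping.
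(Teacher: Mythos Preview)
The paper does not prove this lemma: it is quoted verbatim from \cite[Lemma~4.11]{GL2022} and used as a black box in the proof of \Cref{theorem}. There is therefore no ``paper's own proof'' to compare your proposal against.

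As for the proposal itself, it is a plausible outline rather than a proof. The direction (2)$\Rightarrow$(1) is essentially complete once you justify that the classes $\ov{h_j-1}$ for generators $h_j$ of $H$ are linearly independent in the image of $\Lambda^{s-1}_G$; your graded/Frobenius description of the codomain makes this straightforward. The direction (1)$\Rightarrow$(2), however, is where the content lies, and you acknowledge this yourself. Two points deserve attention. First, lifting a basis of $V/I(G)^2$ to \emph{central} elements of order exactly $p^s$ is not automatic: elements of $W$ are represented by classes of $g-1$ with $g\in\Omega_s(\ZZ(G))$, but an arbitrary lift of a basis vector need not be of this shape, and you must argue that a suitable lift exists. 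Second, and more seriously, even after producing central $g_1,\dots,g_r$ of order $p^s$ with independent $p^{s-1}$-th powers modulo $\mho_s(G)G'$, splitting $H=\GEN{g_1,\dots,g_r}$ off as a direct factor of $G$ with the prescribed complement is a genuine group-theoretic step that your sketch defers entirely to \cite[Lemma~4.3]{GL2022}. That lemma concerns $R_s(G)$ and the isomorphism type of $H_s(G)$; it does not by itself hand you a complement $K$ with $V=I(H)\F_pG+I(G)^2$. The argument in \cite{GL2022} builds this splitting carefully, and your proposal would need to reproduce that construction rather than invoke it.
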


 We are finally ready to prove the main theorem.

\begin{proof}[Proof of \Cref{theorem}]
By \Cref{prop:Babelian}\ref{prop:Babelian1}, there is a group basis
$\mathcal B$ of $B$.
Let $p^s$ be the exponent of $\mathcal B$.
Since $B$ is commutative, $V(B)$ has exponent $p^s$ too.
As $B$ is central in $\F_p G$, \Cref{lemma:Ideals}\eqref{IdealsUnoymedio} yields  $I(B)\subseteq I(\Omega_s(\ZZ(G))G')\mathbb F_pG$.

Let $b\in \mathcal B$ be an element of order $p^s$ in $\mathcal B$.
So $\mathcal B=\GEN{b}\times \mathcal B_0$ for some subgroup $\mathcal B_0$ of $\mathcal B$. 
Then $b-1\in I(B)\setminus I(B)^2$ and  $(b-1)^{p^{s-1}}\in I(B)^{p^{s-1}}\setminus  I(B)^{p^{s-1}+1}$, by Jennings' Theorem (see   \cite{Lazard1953} or \cite[Theorem~11.1.20]{Pas77}).

We claim that $(b-1)^{p^{s-1}}\not\in I(G)^{p^{s-1}+1}+I(\mho_s(G)G')\F_p G$. 
Indeed, the decomposition $I(G)=I(B)\oplus I(C)\oplus I(B)I(C)$ yields a surjective algebra homomorphism $\pi:I(G)\to I(B)$ with kernel $I(C)\oplus I(B)I(C)$ and $\pi(I(G)^m)=I(B)^m$ for any positive integer $m$. If $(b-1)^{p^{s-1}}\in I(G)^{p^{s-1}+1}+I(\mho_s(G)G')\F_p G$ then 
there is $b_1\in I(B)^{p^{s-1}+1}$ such that $(b-1)^{p^{s-1}}-b_1\in I(C)\oplus I(B)I(C) + I(\mho_s(G)G')\F_p G\subseteq I(C)\oplus I(B)I(C)$, by \Cref{prop:Babelian}\ref{Bembeds}. 
Then $(b-1)^{p^{s-1}}=b_1\in I(B)^{p^{s-1}+1}$, in contradiction with the previous paragraph. 

Then $b-1+I(G)^2\not\in \ker \Lambda_G^{s-1}$.  
Therefore there is a subspace $V$ of $I(G)$ containing $b-1$ and $I(G)^2$ such that 	
	$$\frac{I(\Omega_s(\ZZ(G))G')\mathbb F_pG+I(G)^2}{I(G)^2}=\frac{V}{I(G)^2}\oplus \ker\Lambda_G^{s-1}.$$  
Then  \Cref{lemma4.11} yields that $G= H\times K$, with $H$ homocyclic of exponent $p^s$ and $I(H)\F_pG+I(G)^2=V$. 
 Let $\Phi(G)$ denote the Frattini subgroup of $G$. 
As $g\Phi(G)\mapsto g-1+I(G)^2$ defines an isomorphism from $G/\Phi(G)$ to the additive group of $I(G)/I(G)^2$ \cite[Proposition~III.1.15]{Seh78}, there is $h\in H$ such that $h-1+I(G)^2=b-1+I(G)^2$.  
Then $h-1+I(G)^2\not\in \ker \Lambda_G^{s-1}$ and therefore $h$ has order $p^s$. 
Thus $G=\GEN{h}\times G_0$ for some subgroup $G_0$ of $G$.  

Let $J$ be the ideal of $\F_pG$ generated by $b-1$.
Thus $J+I(G)^2=I(\GEN{h})\F_p G+I(G)^2$.
Write $C_0=\F_p\mathcal B_0 \otimes C $, so
	$$\F_pG=\F_p\GEN{b}\otimes  C_0=\F_p\oplus I(\GEN{b})\oplus I(C_0)\oplus I(\GEN{b})I(C_0)= C_0 \oplus J.$$
Thus the codimension of $J$ in $\F_pG$ is $\dim(C_0)=|G|/|b|=|G|/|h|=|G_0|$.
Now it follows from \cite[Lemma 4.9]{GL2022} that
$$\F_pG=J\oplus \F_pG_0. $$
Thus $C_0\cong \F_pG_0$.
Proceeding by induction on the size of $\mathcal B$, we derive that $C\cong \F_p \mathcal C$ for some subgroup $\mathcal C$ of $G$ such that $G\cong \mathcal B\times \mathcal C$. 
This finishes the proof of \Cref{theorem}.
 \end{proof}

\begin{remark}{\rm 
Observe that the assumption $k=\F_p$ has been used to apply \Cref{lemma4.11} and  to use the isomorphism $G/\Phi(G)\cong I(G)/I(G)^2$. We wonder whether \Cref{theorem} holds for arbitrary coefficient fields of characteristic $p$.}
\end{remark}

%In the form of a theorem: 
%\begin{theorem}
%	Let $G$ be a finite $p$-group and let $k$ the field of $p$ elements. If $kG=B\otimes_k C$ for some subalgebras $B$ and $C$ of $kG$ with $B$ commutative, then $G=\mathcal B\times \mathcal G$ for some subgroups subgroups $\mathcal B$ and $\mathcal G$ of $G$, with $\mathcal B$ abelian and such that   $B\cong k\mathcal B$ and $C\cong k\mathcal G$.
%\end{theorem}

%\section{Consequences: Carslon and Kovacs question for some classes of $p$-groups}
% Now we apply the previous theorem to study the following question
%
% \begin{question}\cite[Section 5]{CarlsonKovacs}
% Does the group algebra of a direct indecomposable $p$-group over a field of characteristic $p$ ever admit a nontrivial tensor factorization (as algebras)?
% \end{question}
%
% which we can prove over the prome field for groups that are generated by at most $3$ elements.

We split the proof of the two statements of \Cref{corollary}.

\begin{corollary}
Let $G$ be a finite direct indecomposable $p$-group. If $G$ is at most $3$-generated, then $\F_p G$ is indecomposable as tensor product of proper subalgebras.
\end{corollary}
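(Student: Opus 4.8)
The plan is to argue by contradiction, reducing everything to \Cref{theorem} through a count of generators. Suppose $\F_p G = B \otimes C$ with $B$ and $C$ proper subalgebras; since the factorization is proper, neither factor is $\F_p$, so both $I(B)$ and $I(C)$ are nonzero. The guiding principle is that the minimal number of generators of $G$ equals $\dim_{\F_p} I(G)/I(G)^2$, via the isomorphism $G/\Phi(G) \cong I(G)/I(G)^2$ already recorded above, and that a \emph{noncommutative} tensor factor is expensive: it forces its augmentation ideal to need at least two generators.

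First I would establish the additivity of the generator count across the two factors, namely
\[ \dim_{\F_p} \frac{I(G)}{I(G)^2} = \dim_{\F_p} \frac{I(B)}{I(B)^2} + \dim_{\F_p} \frac{I(C)}{I(C)^2}. \]
This follows from the decomposition $I(G) = I(B) \oplus I(C) \oplus I(B)I(C)$ of \eqref{decomposition}: the two projections $\mathrm{id}_B \otimes \epsilon_C$ and $\epsilon_B \otimes \mathrm{id}_C$ are algebra epimorphisms carrying $I(G)$ onto $I(B)$ and onto $I(C)$, hence $I(G)^2$ onto $I(B)^2$ and $I(C)^2$, while both annihilate the mixed term $I(B)I(C) \subseteq I(G)^2$; together they induce the asserted isomorphism $I(G)/I(G)^2 \cong I(B)/I(B)^2 \oplus I(C)/I(C)^2$, the injectivity being the routine observation that each of the three summands of $I(G)$ lands in $I(G)^2$ as soon as its two projections do.

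Next I would invoke the elementary fact that an augmented $\F_p$-algebra $A$ with $\dim_{\F_p} I(A)/I(A)^2 \leq 1$ is generated as an algebra by a single element (by nilpotency of $I(A)$ together with Nakayama), and is therefore commutative. Consequently, if both $B$ and $C$ were noncommutative we would have $\dim_{\F_p} I(B)/I(B)^2 \geq 2$ and $\dim_{\F_p} I(C)/I(C)^2 \geq 2$, so by the additivity above $d(G) = \dim_{\F_p} I(G)/I(G)^2 \geq 4$, contradicting that $G$ is at most $3$-generated. Hence at least one factor, say $B$ (after possibly swapping the two via the flip isomorphism $B \otimes C \cong C \otimes B$), is commutative, and \Cref{theorem} produces a decomposition $G = \mathcal B \times \mathcal C$ with $\F_p \mathcal B \cong B$ and $\F_p \mathcal C \cong C$; since both factors are proper and nontrivial, so are $\mathcal B$ and $\mathcal C$, contradicting the direct indecomposability of $G$.

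I expect essentially all the difficulty to sit inside \Cref{theorem}, which is already available; the corollary is then a short deduction. The only points needing care are the additivity of the generator count and the single-generator-implies-commutative claim, neither of which is a genuine obstacle. It is worth noting that this route does not use the commutator or center product formulas highlighted in the introduction; those appear to be the tools for the cyclic-derived-subgroup part of \Cref{corollary} rather than for the $3$-generated case.
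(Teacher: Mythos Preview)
Your argument is correct and follows essentially the same route as the paper: establish the direct-sum decomposition $I(G)/I(G)^2 \cong I(B)/I(B)^2 \oplus I(C)/I(C)^2$, use the bound $\dim I(G)/I(G)^2 \le 3$ to force one factor to be generated by a single element (hence commutative), and then invoke \Cref{theorem}. The only cosmetic differences are that the paper verifies the decomposition via $I(B)\cap I(G)^2 = I(B)^2$ rather than via the two augmentation projections, and cites \cite[Proposition~5.2]{Kulshammer1991} for the ``one generator suffices'' step where you appeal directly to nilpotency and a Nakayama-type argument.
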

\begin{proof}
Suppose that a nontrivial tensor factorization $\F_p G = B \otimes C$ exists.
As $I(B)\cap I(G)^2=I(B)^2$, and similarly with $C$, 
$$I(G)/I(G)^2= (I(B)+I(G)^2)/I(G)^2\oplus (I(C)+I(G)^2)/I(G)^2\cong 
I(B)/I(B)^2 \oplus I(C)/I(C)^2.$$
Moreover, $I(B)^2\ne I(B)$, and similarly with $C$, so the two summands are non-trivial. 
As $G$ is generated by (at most) $3$ elements, $|G/\Phi(G)|=|I(G)/I(G)^2|\le p^3$, and hence one of the summands in the above decomposition has dimension $1$, say the first one. 
Then $B=B_0+I(B)^2$ for any unital algebra $B_0$ generated by one element in $I(B)\setminus I(B)^2$. 
Then $B=B_0$, by \cite[Proposition~5.2]{Kulshammer1991}. 
So applying \Cref{theorem} we derive that $B$ is isomorphic to some group algebra $\F_p H$, where $H$ is a direct factor of $G$, a contradiction. 

\end{proof}

%The following elementary fact will be useful.
%
%\begin{lemma}
%Let $J_1\subseteq J_2$ be ideals of $kG$. If $J_1+J_2I(G)=J_2$ then $J_1=J_2$.
%\end{lemma} 
%
%\begin{proof}
%As $I(G)$ is the Jacobson radical of $kG$, $J_2I(G)$ is contained in the radical of $J_2$ and hence it is superfluous in $J_2$.
%\end{proof}

\begin{corollary}
	Let $G$ be a finite direct indecomposable $p$-group. If $G'$ is cyclic, then $\F_p G$ is indecomposable as tensor product of proper subalgebras.
\end{corollary}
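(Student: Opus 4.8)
The plan is to show that in \emph{any} nontrivial tensor factorization $\F_pG=B\otimes C$ (with $B,C$ proper, hence both of dimension strictly between $1$ and $|G|$) one of the factors must be commutative. Once this is achieved, \Cref{theorem} produces a direct decomposition $G=\mathcal B\times\mathcal C$ with $\F_p\mathcal B\cong B$ and $\F_p\mathcal C\cong C$, and both $\mathcal B,\mathcal C$ are nontrivial, contradicting the indecomposability of $G$. If $G$ is abelian both factors are automatically commutative, so I may assume $G'\neq 1$.

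First I would translate the Lie-commutator identity $[B\otimes C,B\otimes C]=[B,B]\otimes C+B\otimes[C,C]$ (the product formula of \cite{LinckelmannBook1} quoted in the introduction) into a statement about associative commutator ideals. Writing $J_B=[B,B]B$ and $J_C=[C,C]C$ for the commutator ideals (the smallest ideals with commutative quotient, so $J_B=0$ iff $B$ is commutative), and using $[\F_pG,\F_pG]\F_pG=I(G')\F_pG$, multiplication by $\F_pG$ yields
$$ I(G')\F_pG \;=\; J_B\otimes C \,+\, B\otimes J_C. $$

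The decisive use of the hypothesis is that $G'$ is cyclic: writing $G'=\langle z\rangle$, the two-sided ideal $M:=I(G')\F_pG$ is generated by the single element $z-1$, so by the two-sided Nakayama lemma its minimal number of generators $\dim_{\F_p} M/(I(G)M+MI(G))$ is at most $1$. I would then bound this same quantity from below on the tensor side. Using $I(G)=I(B)\otimes C+B\otimes I(C)$, the augmentation maps of $B$ and $C$ induce a surjection
$$ \frac{M}{I(G)M+MI(G)} \;\twoheadrightarrow\; \frac{J_B}{I(B)J_B+J_BI(B)} \,\oplus\, \frac{J_C}{I(C)J_C+J_CI(C)}, $$
whose target has dimension $\mu(J_B)+\mu(J_C)$, the sum of the minimal numbers of generators of the two ideals. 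Comparing with the upper bound gives $\mu(J_B)+\mu(J_C)\le 1$, so at least one of $J_B,J_C$ vanishes; the corresponding factor is commutative, and \Cref{theorem} finishes the argument as above.

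The main obstacle I anticipate is the bookkeeping for the surjection in the last display. One must verify that both $I(G)M$ and $MI(G)$ are annihilated so that the map descends to the quotient, that it is onto (clear, since $1\otimes v$ hits $v$), and that it is consistently defined on the overlap $J_B\otimes J_C$: there both augmentation contributions vanish because $J_B\subseteq I(B)$ and $J_C\subseteq I(C)$. I would also record the routine local-algebra facts that $B$ and $C$ are local with radicals $I(B),I(C)$ (these are nilpotent of codimension $1$) and that $\mu(J_B)=0$ forces $J_B=0$ by Nakayama; these are elementary but must be handled carefully since the ideals involved are two-sided and noncommutativity is precisely what is at stake.
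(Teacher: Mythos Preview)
Your argument is correct and follows essentially the same route as the paper: both exploit that $G'$ cyclic makes $I(G')\F_pG$ one-generated, combine this with the decomposition of the commutator ideal along the tensor factorization to force one factor to be commutative, and then invoke \Cref{theorem}. The only difference is packaging---you use the augmentation projections $\pi_B,\pi_C$ and a two-sided Nakayama count $\mu(J_B)+\mu(J_C)\le 1$, while the paper picks an explicit generator $[a_1,a_2]+I(G')I(G)$ with $a_1,a_2\in I(C)$ and reads off $[I(B),I(B)]\subseteq I(B)\cap(I(C)\oplus I(B)I(C))=0$ directly.
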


\begin{proof}
Suppose by contradiction that $\F_p G=B\otimes C$, with both $B$ and $C$ nontrivial subalgebras of $\F_p G$. 
%\sout{
%As $G'$ is cyclic, $I(G')\F_p G$ is generated as right ideal of $\F_p G$ by an element of the form $\alpha=g^{-1}h^{-1}gh-1$ with $g,h\in G$, and this implies that $I(G')\F_p G/I(G')I(G)$ is one dimensional. 
%}
As $G'$ is cyclic, it follows from \cite[Proposition III.1.15(ii)]{Seh78} that $I(G')\F_p G/I(G')I(G)$ is one-dimensional.
Moreover, $I(G')\F_p G=[I(G),I(G)]\F_p G$ and hence $I(G')\F_p G/I(G')I(G)$ is span by any non-zero element of the form $[a_1,a_2]+I(G')I(G)$ with $a_1,a_2\in I(G)$.   Since $I(G)=I(B)\oplus I(C)\oplus I(B)I(C)$, $[I(G),I(G)^2]\subseteq I(G')I(G)$ and $[I(B),I(C)]=0$, we can take $a_1$ and $a_2$ as above, with both in $I(B)$ or both in $I(C)$. Without loss of generality assume that they belong to $I(C)$.
But $I(C)\oplus I(B)I(C)=I(C)\F_p G$, so $I(G')\F_p G=[a_1,a_2]\F_p G  \subseteq I(C)\oplus I(B)I(C)$. It follows that $[I(B),I(B)]\subseteq I(B)\cap I(G')\F_p G\subseteq I(B)\cap (I(C)\oplus I(B)I(C))=0$, so $B$ is commutative. Hence $G$ is decomposable by \Cref{theorem}, a contradiction.
\end{proof}

%\begin{proof}
%		In this proof we consider  the Lie commutator map 
%	\begin{align*}
%		[,]: \frac{I(G)}{I(G)^2}\times \frac{I(G)}{I(G)^2}&\to \frac{I(G')\F_p G}{I(G')I(G)} \\
%		(x+I(G)^2,y+I(G)^2)&\mapsto xy-yx+I(G')I(G),
%	\end{align*}  
%	which is clearly well-defined. 
%	
%	
%	Since $G$ is cyclic-by-abelian, $G'$ is cyclic, and hence $G'/\Phi(G')$ has order $p$. Since $I(G')\F_p G/I(G')I(G)\cong G'/\Phi(G')$, the first quotient has dimension $1$ over $\F_p $.   Thus there are elements $a,b\in I(G)$ such that $[a+I(G)^2, b+I(G)^2]$ generates $I(G')\F_p G/I(G')I(G)$. In particular $ab-ba\neq 0$, and, by the previous lemma, $ab-ba$ generates $I(G')\F_p G$ as an ideal of $\F_p G$.  
%	
%	
%Suppose by contradiction that $\F_p G=B\otimes C$, with both $B$ and $C$ nontrivial subalgebras of $\F_p G$. Then $$\frac{I(G)}{I(G)^2} = \frac{I(B)+I(G)^2}{I(G)^2}\oplus \frac{I(C)+I(G)^2}{I(G)^2}. $$ Since the elements of $B$ commute with the elements of $C$, we can assume that both $a$ and $b$ belong to $I(B)$.  Therefore $[a,b]$ belongs to $I(B)$. Cut $I(B)\oplus I(B)I(C)$  is an ideal of $\F_p G$, so $I(G')\F_p G$, the ideal generated by $[a,b]$, is contained in $I(B)I(C)$. It follows that $[I(C),I(C)]\subseteq I(C)\cap I(G')\F_p G\subseteq I(C)\cap (I(B)\oplus I(B)I(C))=0$, so $C$ is commutative. Hence $G$ is decomposable by \Cref{theorem}.
%\end{proof}

We wonder whether the commutativity assumptions in both \Cref{theorem}
\Cref{prop:Babelian}\ref{prop:Babelian1} is really needed.

\begin{question}\label{Question:ysinoBabelian1}
Suppose that $\F_pG=B\otimes C$.
\begin{enumerate}
\item\label{Question1} Is $B$ a group algebra?
\item\label{Question2} Is $\F_pG/[C,C]\F_pG$ a group algebra?
\end{enumerate}
\end{question}

Actually, it is easy to see that the two questions are equivalent.
Indeed, if the answer to the first question is positive, then $B$ and $C$ are group algebras and hence so are $C/[C,C]C$ and $B\otimes C/[C,C]\cong \F_G/[C,C]\F_p G$. 
Conversely, if the second question has positive answer, then $\F_pG/[C,C]\F_pG$  is a group algebra, which is isomorphic to $B\otimes C/[C,C]C$. Hence  $B$ is also a group algebra, by \Cref{theorem}.

Observe that in the non-modular case over fields it is easy to find non-trivial factorizations of group algebras of indecomposable groups. For example, if $q$ is a divisor of $p-1$ then all the $\F_p$-group algebras of abelian groups of order $q$ are isomorphic. In particular, if $q=r^n$ with $r$ prime and $n\ge 2$, then $C_q$ is indecomposable while $\F_p C_q$ have non-trivial tensor factorizations. This example also shows that \Cref{theorem} fails in the non-modular case.
Observe that the tensor factors are group algebras since the example is built upon the failing of the isomorphism problem in this setting, even existing an indecomposable group and a decomposable one with isomorphic group algebras.
This may suggest that the tensor factors of group algebras should be group algebras again, and in that case group algebras of indecomposable groups with positive answer for the isomorphism problem should be tensor indecomposable. Unfortunately this is not the case. For example, the isomorphism problem has a positive solution of rational group algebras of groups of order 16. This can be seen by simply examining the Wedderburn decomposition of the rational group algebras of the non-abelian groups of order $16$:
% \begin{center}
% \begin{tabular}{|c|cccccccc|}
% \hline
% id & $\Q$ & $\Q(i)$ & $M_2(\Q)$ & $\HQ(\Q)$ & $M_2(\Q(i))$ & $M_2(\Q(\sqrt{2}))$ & $M_2(\Q(\sqrt{-2}))$ & $M_2(\HQ(\Q(\sqrt{2}))$ \\\hline
% 3 & 4 & 2 & 2 & 0 & 0& 0& 0 & 0\\
% 4 & 4 & 2 & 1 & 1 & 0& 0& 0 & 0\\
% 6 & 4 & 2 & 0 & 0& 1& 0 & 0 & 0\\
% 7 & 4 & 0 & 1 & 0& 0& 1 & 0 & 0\\
% 8 & 4 & 0 & 1 & 0& 0& 0 & 1 & 0\\
% 9 & 4 & 0 & 1 & 0& 0& 0 & 0 & 1\\
% 11 & 8 & 0 & 2 & 0& 0& 0 & 0 & 0\\
% 12 & 8 & 0 & 0 & 2& 0& 0 & 0 & 0\\
% 13 & 8 & 0 & 0 & 0& 1& 0 & 0 & 0\\\hline
% \end{tabular}
% \end{center}
\begin{eqnarray*}
\Q G_3 &=& 4\Q \oplus 2\Q(\sqrt{-1}) \oplus 2M_2(\Q) \\
\Q G_4 &=& 4\Q \oplus 2\Q(\sqrt{-1}) \oplus 2\HQ(\Q) \\
\Q G_6 &=& 4\Q \oplus 2\Q(\sqrt{-1}) \oplus M_2(\Q(\sqrt{-1})) \\
\Q G_7 &=& 4\Q \oplus M_2(\Q) \oplus M_2\Q(\sqrt{2})  \\
\Q G_8 &=& 4\Q \oplus M_2(\Q) \oplus M_2\Q(\sqrt{-2})  \\
\Q G_9 &=& 4\Q \oplus M_2(\Q) \oplus  \HQ(\Q(\sqrt{2})) \\
\Q G_{11} &=& 8\Q \oplus 2M_2(\Q) \\
\Q G_{12} &=& 8\Q \oplus 2\HQ(\Q) \\
\Q G_{13} &=& 8\Q \oplus M_2(\Q(\sqrt{-1}))
\end{eqnarray*}
Here $G_i$ represents the $i$-th group in the {\sf GAP} Small Groups Library \cite{GAP4}.
 These Wedderburn decompositions can be computed with the command \texttt{WedderburnDecompositionInfo} of the {\sf GAP} package {\sf Wedderga} \cite{Wedderga4.10.5}. 
The first two decompositions show that
\begin{eqnarray*}
\Q G_3 &\cong& (2\Q) \otimes_{\Q} (2\Q \oplus \Q(\sqrt{-1}) \oplus M_2(\Q))) \\
\Q G_4 &\cong& (2\Q) \otimes_{\Q} (2\Q \oplus \Q(\sqrt{-1}) \oplus \HQ(\Q)))
\end{eqnarray*}
While $2\Q \cong \Q C_2$, neither of the right hand factors is  isomorphic to a group algebra, because the Wedderburn decompositions of the non-abelian groups of order $8$ are
$$
\Q D_8 = 4\Q \oplus M_2(\Q) \qand \Q Q_8 = 4\Q \oplus \HQ(\Q).$$
 \bibliographystyle{amsalpha}
 \bibliography{MIP}

\newcommand{\etalchar}[1]{$^{#1}$}
\providecommand{\bysame}{\leavevmode\hbox to3em{\hrulefill}\thinspace}
\providecommand{\MR}{\relax\ifhmode\unskip\space\fi MR }
% \MRhref is called by the amsart/book/proc definition of \MR.
\providecommand{\MRhref}[2]{%
  \href{http://www.ams.org/mathscinet-getitem?mr=#1}{#2}
}
\providecommand{\href}[2]{#2}
\begin{thebibliography}{BBCH{\etalchar{+}}24}

\bibitem[BBCH{\etalchar{+}}24]{Wedderga4.10.5}
G.~K. Bakshi, O.~Broche~Cristo, A.~Herman, O.~Konovalov, S.~Maheshwary,
  G.~Olteanu, A.~Olivieri, Á. del Río, and I.~Van~Gelder, \emph{{Wedderga},
  wedderburn decomposition of group algebras, {V}ersion 4.10.5}, \href
  {https://gap-packages.github.io/wedderga}
  {\texttt{https://gap-packages.github.io/}\discretionary
  {}{}{}\texttt{wedderga}}, Feb 2024, Refereed GAP package.

\bibitem[CK95]{CarlsonKovacs}
J.~F. Carlson and L.~G. Kov{\'a}cs, \emph{Tensor factorizations of group
  algebras and modules}, J. Algebra \textbf{175} (1995), no.~1, 385--407
  (English).

\bibitem[GAP24]{GAP4}
The GAP~Group, \emph{{GAP -- Groups, Algorithms, and Programming, Version
  4.13.1}}, 2024.

\bibitem[GL24]{GL2022}
D.~García-Lucas, \emph{The modular isomorphism problem and abelian direct
  factors}, Mediterr. J. Math. \textbf{21} (2024), Article 18.

\bibitem[Hor86]{Horst1986}
C.~Horst, \emph{A cancellation theorem for artinian local algebras.},
  Mathematische Annalen \textbf{276} (1986), 657--662.

\bibitem[K{\"u}l91]{Kulshammer1991}
B.~K{\"u}lshammer, \emph{Lectures on block theory}, London Mathematical Society
  Lecture Note Series, vol. 161, Cambridge University Press, Cambridge, 1991.

\bibitem[Laz54]{Lazard1953}
M.~Lazard, \emph{Sur les groupes nilpotents et les anneaux de {Lie}}, Annales
  scientifiques de l'\'Ecole Normale Sup\'erieure \textbf{3e s{\'e}rie, 71}
  (1954), no.~2, 101--190 (fr). \MR{19,529b}

\bibitem[Lin18]{LinckelmannBook1}
M.~Linckelmann, \emph{The block theory of finite group algebras. vol.1}, London
  mathematical society student texts 91 ; 92, Cambridge University Press, 2018.

\bibitem[MSS23]{MSS23}
Leo Margolis, Taro Sakurai, and Mima Stanojkovski, \emph{Abelian invariants and
  a reduction theorem for the modular isomorphism problem}, J. Algebra
  \textbf{636} (2023), 1--27.

\bibitem[Pas77]{Pas77}
D.~S. Passman, \emph{The algebraic structure of group rings}, Pure and Applied
  Mathematics, Wiley-Interscience [John Wiley \& Sons], New York-London-Sydney,
  1977.

\bibitem[San85]{Sandling85}
R.~Sandling, \emph{The isomorphism problem for group rings: a survey}, Orders
  and their applications ({O}berwolfach, 1984), Lecture Notes in Math., vol.
  1142, Springer, Berlin, 1985, pp.~256--288.

\bibitem[Seh78]{Seh78}
S.~K. Sehgal, \emph{Topics in group rings}, Monographs and Textbooks in Pure
  and Applied Math., vol.~50, Marcel Dekker, Inc., New York, 1978.

\end{thebibliography}

\end{document}